\newtheorem{theorem}{Theorem}[section]
\newtheorem{lemma}[theorem]{Lemma}
\newtheorem{prop}[theorem]{Proposition}
\theoremstyle{definition}
\theoremstyle{remark}
\newtheorem{remark}[theorem]{Remark}
\newcommand{\RR}{\mathbb R}
\newcommand{\CC}{\mathbb C}
\newcommand{\PP}{\mathbb P}
\newcommand{\tdt}{\times\dots\times}
\newcommand{\otdot}{\otimes\dots\otimes}
\DeclareMathOperator{\Sym}{Sym}
\numberwithin{equation}{section}
\begin{document}
\title[Tensors]{Report on Geometry and representation theory of tensors for computer science, statistics and other areas,  July 21 to July 25, 2008 at the American Institute of Mathematics, Palo Alto, California,
organized by Joseph Landsberg, Lek-Heng Lim, Jason Morton, and Jerzy Weyman }

\author{Luke Oeding}
\address{Dept. of Mathematics \\
Mailstop 3368\\
Texas A \&M University\\
College Station, TX  77843-3368}
\curraddr{}
\email{oeding@math.tamu.edu}
\thanks{}


\date{\today}




\maketitle
\section*{Introduction}

This workshop was sponsored by AIM and the NSF and it brought in participants from the US, Canada and the European Union to Palo Alto, CA to work to translate questions from quantum computing, complexity theory, statistical learning theory, signal processing, and data analysis to problems in geometry and representation theory. 

In all these areas, varieties in spaces of tensors that are invariant under linear changes of coordinates appear as central objects of study. Despite their different origins, there are striking similarities among the relevant varieties.  We studied questions such as finding defining equations, hidden symmetries, and singularities.

The AIM workshop was preceded by a graduate student workshop at MSRI on ``Geometry and Representation Theory of Tensors for Computer Science, Statistics, and other areas,''  which was organized by Landsberg, Lim, and Morton.  One goal of this workshop was to introduce the basic tools from geometry and representation theory which are used in studying problems that come up in these other areas. About half of the participants attended both workshops.

At AIM, the mornings were devoted to introductory talks given by L. Valiant, P. Comon, G. Gour, J.-Y. Cai, L. De Lathauwer, and J. Morton.
In the afternoons the work focused on the following 4 areas, however many participants worked with more than one group.

\begin{enumerate}
\item Multi-linear techniques in data analysis and signal processing
\item Geometric approaches to P?=NP
\item Matchgates and holographic algorithms
\item Entanglement in quantum information theory
\end{enumerate}
The participants split into these 4 groups, each of which contained a mix of practitioners and geometers.  The groups then went to work on translating problems into a language which could be understood by both parties.  After the translation work reached a stable point, the two goals were to determine what is already known in each area and to translate the open problems posed by the practitioners into well-posed problems in geometry.  Finally, the participants made first steps toward answers - they determined simpler questions to start working on before the larger problems can be addressed; they made initial calculations to determine what should be true; and they made plans for collaboration focused on these problems.

\section*{Acknowledgments}
The author would like to thank A. Bernardi, A. Boralevi, P.  B\"urgisser,  P. Comon,   L. DeLathauwer,  D. Gross, L. Gurvits, L.H. Kim, J.M. Landsberg, L. Manivel, G. Ottaviani, and  J. Weyman, for the many useful discussions both in person and via email during the preparation of this report.

\part{Signal Processing}
\section{Report}
The decomposition of tensors into a sum of rank-one tensors has retained the attention of those in signal processing for quite a while.  Collaboration with geometers continues to play an important role in determining what is already known and what tools can techniques may be immediately applicable to this and other problems in signal processing.  

An important aspect of the workshop was developing a dictionary to translate notions in signal processing to notions in geometry.   This dictionary opened the lines of communication between the engineers and the geometers.  We will recap some of the relevant definitions here.

Let $V_{1},\dots,V_{n}$ be vector spaces (real or complex) and let $V_{1}\otdot V_{n}$ denote their tensor product.  A \emph{tensor} $T$ is an element of $V_{1}\otdot V_{n}$.  A \emph{tensor decomposition} is the expression of a tensor as the linear combination of other tensors (presumably of lower rank).  The \emph{rank} of a tensor is the minimum $r$ such that $T$ may be expressed as the sum of $r$ rank-one tensors.  

The rank of a tensor is unchanged by scalar multiplication, so it is natural to work in projective space. 
The following are the geometric objects that we will need.

The space of all rank-one tensors is the Segre variety, defined by the embedding,
\begin{eqnarray*} Seg: \PP V_{1} \tdt \PP V_{n} &\rightarrow& \PP \big(V_{1}\otdot V_{n}\big) \\
 ([v_{1}],\dots,[v_{n}]) &\mapsto & [v_{1}\otdot v_{n}]
.\end{eqnarray*}
The space of all rank-one symmetric tensors is the Veronese variety, defined by the embedding,
\begin{eqnarray*} v_{d}: \PP V  &\rightarrow& \PP \big(S^{d}V\big) \\{}
 [w] &\mapsto & [(w)^{d}]
.\end{eqnarray*}
The variety of partially symmetric rank-one tensors is the Segre-Veronese variety defined by the embedding
\begin{eqnarray*} Seg_{d_{1},\dots,d_{n}}: \PP V_{1} \tdt \PP V_{n} &\rightarrow& \PP \big(  S^{d_{1}}V_{1} \otdot S^{d_{n}}V_{n}  \big) \\
 ([w_{1}],\dots,[w_{n}]) &\mapsto & [(w_{1})^{d_{1}} \otdot (w_{n})^{d_{n}}]
.\end{eqnarray*}
Suppose $X\subset \PP V$ is a variety.  The $r^{th}$ \emph{secant variety} to $X$, denoted $\sigma_{r}(X)$, is the Zariski closure of all embedded secant $\PP^{r-1}$'s to X, \emph{i.e.},
\[
\sigma_{r}(X) = \overline{\bigcup_{x_{1},\dots,x_{r} \in X} \PP( span\{x_{1},\dots,x_{r}\})}\subset \PP V
,\]
where the overline indicates Zariski closure.     The \emph{subspace varieties}, denoted $Sub_{r_{1},\dots,r_{n}}(V_{1}\otdot V_{n}) \subset \PP (V_{1}\otdot V_{n})$  are defined in \cite{Landsberg-Morton} ch. 5. as
\[
Sub_{r_{1},\dots,r_{n}}(V_{1}\otdot V_{n}) = \left\{ [T] \in \PP(V_{1}\otdot V_{n}) \mid \begin{array}{c} \forall i \; \exists A_{i}\subset V_{i},\; \dim(A_{i}) = r_{i}, \\{} [T] \in \PP (A_{1}\otdot A_{n}) \end{array} \right\}
.\]
(Note: these are related to   ``rank varieties'' in \cite{Weyman} ch. 7.)  We also consider the \emph{symmetric subspace variety}, 
\[Sub_{r}(S^{n}(V)) := \{[T]\in \PP( S^{n}V )\mid \exists A\subset V, \; \dim(A)=r,\; [T]\subset \PP (S^{n}A)\} 
\subset \PP (S^{n}V)
.\]  
In this setting, we list various notions of rank.
\begin{enumerate}

\item[*] The \emph{rank} of the point $[T] \in \PP (V_{1} \otdot V_{n})$ is the minimum $r$ such that $[T]$ can be expressed as the sum of $r$ points on the Segre variety.
\item[*] More generally, for an algebraic variety $X\subset \PP V$ the \emph{$X$-rank} of a point $p\in \PP V$, denoted $R_{X}(p)$ , is defined by $R_{X}(p) := \min\{r\mid p\in\langle x_{1},\dots,x_{r}\rangle, x_{i}\in X \}$, where $\langle . \rangle$ denotes linear span.
\item[*] The \emph{$X$-border rank} is defined by  $\underline{R}_{X}(p) := \min\{r\mid p\in \sigma_{r}(X)\}$. In particular, $\sigma_{r}(X)$ consists of all points with $X$-border rank no greater than $r$.
\item[*] The \emph{symmetric rank} of a tensor $[T] \in \PP(S^{n}(V))$ is the $X$-rank when $X = v_{n}(\PP V)$.
\item[*] The \emph{partially symmetric rank} of a tensor $[T] \in \PP(V_{1}^{\otimes d_{1}} \otdot V_{n}^{\otimes d_{n}})$ is the $X$-rank when $X = Seg_{d_{1},\dots,d_{n}}(\PP V_{1}\tdt \PP V_{n})$.
\item[*]  The \emph{multilinear rank} of a tensor $[T] \in \PP (V_{1} \otdot V_{n})$ is the minimum $( r_{1},\dots,r_{n} )$ such that $[T] \in Sub_{r_{1},\dots, r_{n}}(V_{1}\otdot V_{n})$. 
\item[*]  The \emph{symmetric multilinear rank}, (deonted $X_{sym}$ below) of a tensor $[T] \in \PP (S^{n}V)$ is the minimum $r$ such that $[T] \in Sub_{r}(S^{n}V)$. 
\item[*] When no $X$ is specified, the convention is that the \emph{rank} (\emph{border rank}) of a tensor $[T]\in \PP( V_{1}\otdot V_{n})$ is the $X$-rank (respectively $X$-border rank) when $X = Seg(\PP V_{1}\tdt \PP V_{n})$.
\item[*] For $X\subset \PP V$, the \emph{generic $X$-rank} is the $X$-rank of a generic tensor in $\PP V$, \emph{i.e.} it is the smallest $r$ such that $\sigma_{r}(X) = \PP V$.
\item[*] A \emph{typical $X$-rank} is an integer $d$ such that the set $\{p \in \PP (V_{1}\otdot V_{n})\mid R_{X}(p) = d \}$ has non-empty interior, \emph{i.e.} it is an $X$-rank that occurs with nonzero probability. Note: over $\RR$ there can be more than one typical $X$-rank.
\end{enumerate}

As far as applications are concerned, two important questions must be answered: (a) in what cases is a tensor decomposition unique, or at least when does there exist a finite number of such decompositions, and (b) how can one reasonably approximate a generic tensor by another of lower rank.  A list of suggested problems from Comon (labeled P\# below) were distributed before the workshop.  During the workshop, the problems below were raised and discussed within the signal processing group. 
\begin{itemize}
\item
\textbf{The Alexander-Hirschowitz theorem and generic ranks:}  
The Alexander-Hirschowitz theorem describes the dimension of $\sigma_{k}(v_{d}(\PP^{N}))$, which is the expected one, with a list of exceptions. This theorem is important because, in particular, it determines the generic symmetric rank of a tensor (see \cite{AH1, AH2, AH3}). G. Ottaviani gave a nice exposition of the proof of Alexander-Hirschowitz, (see also \cite{OttBram2,CGLM}).  

The extension of the Alexander-Hirschowitz theorem to not necessarily symmetric tensors should be the subject of future work (problem P5).  It should be noted that progress in this direction has already been made, see \cite{AOP}.  Very recently, M. V. Catalisano, A.Geramita, and  A.Gimigliano \cite{CGG08} have found that the secant varieties to $Seg(\PP^{1}\tdt \PP^{1})$ are all of the expected dimension with one exception.

\item 
\textbf{Terracini's lemma:} 
Terracini's lemma  \cite{Terracini} is a tool which allows one to compute the generic rank numerically, and is especially useful when theoretical results do not allow one to obtain it.  Terracini's lemma is also often used as a theoretical tool in the Alexander-Hirschowitz theorem, and the work of Chiantini-Ciliberto for example.

 Terracini's lemma also holds true for tensors with Hermitian symmetries, which is useful in Signal Processing when the data are complex (e.g. in Telecommunications) and when the tensor is estimated from the data.  An example was presented by P. Comon regarding cumulant tensors.  In the complex case, and for a given order, there are several cumulant tensors that can be defined. An example of an order 4 cumulant tensor is
 \[T_{ijkl}=Cum\{x_i,x_j,x_k^*,x_l^*\},\]
 where ($*$) indicates complex conjugation. This one has plain symmetries ($ijkl \rightarrow jilk$)  \emph{and}   Hermitian symmetries ($ijkl \rightarrow klij$ or $ijkl \rightarrow lkji$).
Actually, these are the same symmetries as for the 4th order moment $E\{x_i,x_j,x_k^*,x_l^*\}$.  See also  \cite{LathCastCard}.

\item
\textbf{Uniqueness of tensor decomposition:}
Uniqueness of a given tensor decomposition is related to papers of Mella \cite{Mella}, and Chiantini-Ciliberto, \cite{CC02,CC06}. In the cases where the Alexander-Hirschowitz theorem imply that there are at most a finite number of orbits, the work of Chiantini and Ciliberto gives a sufficient condition for uniqueness. Their condition is called weak defectiveness.  This condition is straightforward, but to use it as a test is not a simple calculation.   Mella gives an explicit list of some cases of uniqueness using different methods than Chiantini and Ciliberto.  Certainly there is more work to do for the various types of tensor decompositions related to the other notions of rank mentioned above.

 \item
\textbf{Real tensors (P10):} Tensor decomposition over $\RR$ is more complicated.  In particular, there can be more than one typical rank, but it is not known if there can be more than two typical ranks. This question has remained open after attempts by G. Ottaviani and P. Comon to find counterexamples.
For the study of ranks in the real field, hyperdeterminants and other invariants should be useful.
\item
\textbf{Non-negative tensors (P11):}
Tensors with real positive coefficients are useful in spectral analysis and specifically to medical imaging.
The approximation of a tensor by another of lower rank is well defined for tensors defined on the cone of non-negative reals $\mathbb{R}_+^{\otimes n}$. A proof was sketched by L.-H. Lim.

The approximation of a tensor by another of lower rank is also well-defined when the vectors in at least one mode are constrained to be orthogonal. A proof was sketched by L. De Lathauwer.

A question that came up at the workshop is whether a specific metric is necessary. A metric is mandatory every time we have to deal with an approximation, which may happen for real positive, complex, or real tensors. However, it was claimed that its exact form is not necessarily essential. The question of decomposing a non-negative tensor into a sum of rank-one terms can be posed without a metric. Whether a metric would help to solve the problem is another question. It may be that a topology is sufficient, as for the complex case.

\item
\textbf{Tensors with partial symmetries (P13):}  

Consider the following types of tensors: 
 \subitem{(i)} Tensors in the secant varieties to the Segre-Veronese, $[T] \in \sigma_{k}\big( Seg_{d_{1},\dots,d_{n}}(V_{1}\tdt V_{n}) \big) \subset \PP \big( S^{d_{1}}V_{1}\otdot S^{d_{n}}V_{n}\big)$.
 \subitem{(ii)} 3-tensors enjoying the circular symmetries, \emph{i.e} after a choice of a basis $\{v_{s}^{i}\}$ for each vector space $V_{s}$ we can write $T = T_{ijk}v_{1}^{i}\otimes v_{2}^{j}\otimes v_{3}^{k} \in V_{1}\otimes V_{2}\otimes V_{3}$ and the circular symmetries can be expressed as $T_{ijk}=T_{jki}=T_{kij}$. 
 
 These tensors are encountered in particular when using the joint characteristic function, estimated from data measurements. These are also related to the BIOME and FOOBI algorithms.

(i) was discussed extensively during the workshop, whereas (ii) was only mentioned, but not focused on. It was pointed out that (i) can be useful for the decomposition of the characteristic function, see \cite{ComoR06}, as well as other types of partial symmetries. The example of circular symmetry quoted above was mentioned by L. DeLathauwer.  See also \cite{LathCast}

\item
\textbf{Kruskal's theorem for tensors:}
Given an explicit decomposition of a tensor $T$, Kruskal's theorem provides a test as to whether that decomposition is unique.  A translation into algebraic geometry terminology given by J.M. Landsberg allows one to shorten the proof. Possible generalizations to symmetric and partially symmetric tensors as well as other extensions were discussed.
\item 
\textbf{Comon's Conjecture (P15)}: Do rank and symmetric rank always coincide? In other words, do we have equality in the relation
$$\sigma_k(\nu_d(\mathbb{P}V))\subseteq \sigma_k(Seg(\PP V \tdt \PP V))\cap \mathbb{P}(S^dV).$$

This has been proved for at least two cases: (i) $\operatorname{rank} (T) \leq \operatorname{f-rank}_{[1,\dots,k],[k+1, \dots ,d]}(T)$ and $k<d/2$ (A proof  of this fact, was derived and presented by D. Gross during the workshop. See Lemma \ref{Gross Lemma} below for a definition of $\operatorname{f-rank}_{[1,\dots,k],[k+1,\dots,d]}(T)$ as well as the proof of Gross.), and (ii) under Kruskal's condition $2\,rank(T)\le dn -d+1$, where $n$ denotes the dimension. See also \cite{CGLM}.

For other cases, the problem remains open, despite attempts by A. Bernardi and L. Oeding to find counterexamples.
\item 
\textbf{Rank versus border rank:}
We ask, for which varieties $X$ is it true that the  $R_{X}(p)=\underline{R}_{X}(p)$ for all $p\in \PP(V)$?  In general, rank $\leq k$ is not a closed condition, so it is expected that rank and border rank rarely coincide, (cf. open problem 3).

Note that for any variety $X$, the tangential variety $\tau(X)$ is contained in the secant variety $\sigma(X)$.  When $X$ is the $n$-factor Segre variety, for instance, $\tau(X)$ contains points of rank $n$, however all of these points have border rank 2.  This example shows that there can be an arbitrary discrepancy between rank and border rank.

\item \textbf{Multilinear-ranks:}
L. DeLathauwer introduced a generalization of block decomposition of matrices to block decomposition of tensors. The group pointed out links between this and subspace varieties.  The geometric objects that encapsulate this idea are the secant varieties to the subspace varieties.  We ask, what is the generic $X$-rank when $X$ is the subspace variety?  This question comes down to computing the dimensions of the secant varieties $\sigma_{s}(X)$.

We also want to consider symmetric multilinear rank. We ask, what is the generic $X$-rank when $X = Sub_{r} (S^{k}V)$? Already when $k=3$ these are important questions with great interest to engineers in signal processing.

\item
\textbf{Strassen's (Generalized) Direct Sum Conjecture:}
Suppose $X_{1}= Seg(\PP A_{1}\tdt \PP A_{n})$ and $X_{2} = Seg(\PP B_{1}\tdt \PP B_{n})$, and let $X = Seg\big(\PP( A_{1}\oplus B_{1}) \tdt \PP( A_{n}\oplus B_{n})\big)$.  Suppose $p_{1}\in  A_{1}\otdot A_{n}$, $p_{2}\in B_{1} \otdot B_{n}$ and let $p= p_{1}+p_{2}$. It is known that $R_{X}(p) \leq R_{X_{1}}(p_{1})+R_{X_{2}}(p_{2})$, the conjecture is that equality holds.  

Note:  The original conjecture was stated for bilinear maps, (i.e. the case $n=3$), and B\"urisser, et al. indicate that this conjecture is probably false  (p360 \cite{Burgisser}).  In fact, Sch\"onhage showed that the conjecture is false for border rank, so this seems to be a very subtle problem.  P. B\"urgisser gave a presentation of Sch\"onhage's result during the workshop.

\item
\textbf{Orbits (P6):} For which vector spaces with group actions do there exist finitely many orbits?  In the cases that there are finitely many orbits, what are those orbits explicitly?  J. Weyman pointed out that this problem has been solved. A paper of V. Kac has addressed the problem, \cite{Kac80}, but there are mistakes, corrected in pg 523 \cite{Kac85}, and in the multiplicity free case, \cite{Leahy}. 

\item
\textbf{Explicit decompositions:}  Find explicit algorithms for the decomposition of tensors when the dimension is larger than two (c.f. (P7)). Several suboptimal algorithms exist, for example \cite{lathauwer:295,AlbeFCC04:laa, LathCast, LathCastCard, lathauwer:642}, however they are limited in that the symmetries are not fully exploited and the rank must be smaller than some upper bound, hence, these algorithms cannot decompose generic symmetric tensors.

A suggestion was made by L. Gurvits to resort to doubly stochastic operators, but has not been pursued in depth.
\item
\textbf{Joint decompositions:}  Considering two tensors (of possibly different orders) simultaneously may have two advantages: (i) There may be a possibility to restore uniqueness when the rank expected from the application exceeds the generic rank of the tensors (ii) There may be better robustness in the presence of noise. The problem (cf. (P18)) was discussed (J.M. Landsberg, P. Comon, J. Weyman, G. Ottaviani), but postponed to further collaborations.
\item
\textbf{Sylvester's theorem:}  This theorem was pointed out to be derived earlier by Gaspard de Prony (1755-1839),\cite{Prony}.  This is related to the signal processing problem of the estimation of frequencies of damped sinusoids.
\end{itemize}

It should be mentioned that more generally, all the questions raised for Segre or Veronese varieties (genericity, uniqueness, etc.) can be posed for subspace varieties.

For further reference for the ideas surrounding signal processing, tensor decompositions and questions of rank, we compiled the following list: \cite{AOP, CGG1, CGG2, CGG2.5, Comon1, Comon2, deSilvaLim, LM04, OttBram1,Strassen83 }.

\subsection{From D. Gross}

Let $V=\CC^n$ and $\Sym^d(V)$ be the totally symmetric space in $V^{\otimes d}$. Let $T\in\Sym^d(V)$. By a \emph{decomposition} of $T$, we mean a set $D=\{a_i\}_i$, where $a_{i}^{(k)} \in V $ for each $i,k$, and 
\begin{equation*}
	a_i = a_i^{(1)}\otimes\dots\otimes a_i^{(d)}
\end{equation*}
 is a rank-one tensor that
\begin{equation*}
	T = \sum_i a_i.
\end{equation*}
A decomposition $D$ is \emph{symmetric}, if all elements in $D$ are symmetric.
Let $I\subset\{1,\dots,d\}$. We write
\begin{equation*}
	\operatorname{f-rank}_{I, I^C}(T)
\end{equation*}
for the rank of $T$ viewed as a degree-2 tensor obtained by grouping the respective indices in $I$ and $I^C$ together (the 'f' stands for ``forgetful'' or ``flattening''). Also, if $D$ is a decomposition, write 
\begin{equation*}
	D^{(I)}=\{\bigotimes_{j\in I} a_i^{(j)}\}_i
\end{equation*}
for the projection of the decomposition onto the factors in $I$.  We
get
\begin{lemma}[D. Gross] \label{Gross Lemma}
	Let $T\in S^d(V)$, $d>2$. If, for any $k$,
	\begin{equation*}
		\operatorname{rank} T \leq \operatorname{f-rank}_{[1,\dots,k],[k+1,\dots,d]}(T)
	\end{equation*}
	then any minimal decomposition of $T$ is symmetric.

	What is more, let $D=\{a_i\}_i$ be a decomposition of a symmetric tensor.
	Assume that for any $I\subset\{1,\dots,d\}$ where $|I|=d-2$ the set
	$D^{(I)}$ is linearly independent. Then $D$ is symmetric.
\end{lemma}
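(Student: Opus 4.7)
The plan is to handle Part 2 first, since its symmetry-plus-linear-independence template can be reused to drive Part 1. For Part 2, fix any pair $\{p,q\}\subset\{1,\dots,d\}$ and set $I=\{1,\dots,d\}\setminus\{p,q\}$, so $|I|=d-2$. Symmetry of $T$ gives $T=\sigma_{p,q}(T)$ for the transposition swapping tensor positions $p$ and $q$, and applying this to the decomposition yields, after collecting the $d-2$ unaffected factors,
\[
\sum_i \Bigl(\bigotimes_{j\in I} a_i^{(j)}\Bigr) \otimes \bigl(a_i^{(p)}\otimes a_i^{(q)} - a_i^{(q)}\otimes a_i^{(p)}\bigr)=0.
\]
Linear independence of $D^{(I)}$ forces each coefficient to vanish, so $a_i^{(p)}\parallel a_i^{(q)}$. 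Ranging over all pairs $\{p,q\}$, for each $i$ any two factors of the $i$-th term are parallel, which makes it a scalar multiple of a symmetric rank-one tensor.

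For Part 1, take a minimal decomposition $T=\sum_{i=1}^r a_i^{(1)}\otimes\cdots\otimes a_i^{(d)}$ and the value of $k$ for which $r=\operatorname{rank}(T)=\operatorname{f-rank}_{[1,\dots,k],[k+1,\dots,d]}(T)$ (equality, because the reverse inequality always holds). Flattening at $k$ gives $T=\sum_i b_i\otimes c_i$ with $b_i=a_i^{(1)}\otimes\cdots\otimes a_i^{(k)}$ and $c_i=a_i^{(k+1)}\otimes\cdots\otimes a_i^{(d)}$, and the flattening-rank equality forces both $\{b_i\}$ and $\{c_i\}$ to be linearly independent. Applying the Part 2 template on the subgroup $S_k\subset S_d$ that permutes the first $k$ factors: $\sigma(T)=T$ combined with LI of $\{c_i\}$ gives $\sigma(b_i)=b_i$ for all $\sigma\in S_k$, so $b_i\in S^kV$; a rank-one symmetric tensor is a pure power, so $b_i=u_i^{\otimes k}$. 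The parallel argument on $S_{d-k}$ gives $c_i=v_i^{\otimes(d-k)}$, reducing the decomposition to $T=\sum_{i=1}^r u_i^{\otimes k}\otimes v_i^{\otimes(d-k)}$.

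To finish I would force $u_i\parallel v_i$ for every $i$. Taking a ``cross'' transposition $\sigma_{p,q}$ with $p\le k<q$ that exchanges a $u$-slot with a $v$-slot and using $T=\sigma_{p,q}(T)$, after pulling out the identical factors at the remaining positions and rearranging tensor factors via the $S_d$-action isomorphism on $V^{\otimes d}$, one obtains
\[
\sum_i \bigl(u_i\otimes v_i - v_i\otimes u_i\bigr)\otimes u_i^{\otimes(k-1)}\otimes v_i^{\otimes(d-k-1)}=0
\]
in $V^{\otimes 2}\otimes V^{\otimes(d-2)}$. Provided the residual family $\{u_i^{\otimes(k-1)}\otimes v_i^{\otimes(d-k-1)}\}_i$ is linearly independent, each $u_i\otimes v_i - v_i\otimes u_i$ vanishes and so $u_i\parallel v_i$, making each rank-one summand symmetric.

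The main obstacle is exactly this residual linear-independence step: linear independence of the ``top'' Veronese-type tensors $\{u_i^{\otimes k}\}$ does not automatically descend to linear independence of lower-degree analogues such as $\{u_i^{\otimes(k-1)}\}$ (an elementary example in $\CC^2$ exhibits $\{v_1,v_2,v_3\}$ dependent while $\{v_i^{\otimes 2}\}$ is independent). The cleanest remedy is to verify for the reduced decomposition $D=\{u_i^{\otimes k}\otimes v_i^{\otimes(d-k)}\}$ that $D^{(I)}$ is linearly independent for every $|I|=d-2$ and then invoke Part 2 as a black box; since for symmetric $T$ the flattening rank of $T$ depends only on $|I|$, the technical heart is boosting the given rank-equality at block size $k$ to one at block size $d-2$, which is what the restriction $k<d/2$ or the universal reading of the ``for any $k$'' hypothesis in case (i) of Comon's conjecture above makes available.
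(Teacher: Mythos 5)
Your Part 2 argument is correct and is the paper's own proof in different clothing: contracting $T$ against a basis of $(\operatorname{span}D^{(I)})^{*}$ dual to the $a_i\in D^{(I)}$ (as the paper does) is the same mechanism as your ``apply $\sigma_{p,q}$, subtract, and use linear independence to kill each coefficient''; the paper just runs the pairs $\{p,q\}$ through an inductive chain $\{d-1,d\},\{d-2,d-1\},\dots$ rather than all pairs at once. For Part 1, the detour through flattening at a single $k$, extracting pure powers $u_i^{\otimes k}\otimes v_i^{\otimes(d-k)}$, and then trying a cross transposition is exactly where the argument breaks, for the reason you yourself identify: linear independence of $\{u_i^{\otimes k}\}$ does not descend to $\{u_i^{\otimes(k-1)}\otimes v_i^{\otimes(d-k-1)}\}$. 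But the ``cleanest remedy'' you name at the end is not a remedy to be developed further --- it \emph{is} the paper's entire proof of Part 1, and it closes with no remaining technical heart: reading the hypothesis at $k=d-2$ (equivalently $k=2$, since a flattening and its transpose have equal rank), together with the trivial inequality $\operatorname{f-rank}_{I,I^C}(T)\leq\operatorname{rank}(T)$, gives equality; a minimal decomposition $D$ with $r$ terms exhibits the flattening as $\sum_i b_i\otimes c_i$ with $b_i\in D^{(I)}$, whose rank is at most $\dim\operatorname{span}D^{(I)}$, so equality with $r$ forces $D^{(I)}$ to be linearly independent; and since $T$ is symmetric the flattening rank depends only on $|I|$, this holds for every $I$ with $|I|=d-2$, so Part 2 applies directly. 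In short: delete the single-$k$ detour and the cross-transposition, and promote your last sentence to the proof. (The monotonicity of flattening ranks needed to ``boost'' from a general $k<d/2$ to $k=2$ is only required if one insists on the existential reading of ``for any $k$''; under the universal reading nothing needs boosting.)
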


\begin{proof}
	We prove the second assertion first. Let $I=\{1,\dots,d-2\}$. By
	assumption, $D^{(I)}$ is linearly independent, so there exists a
	dual basis $\{\alpha_i\}_i$ fulfilling $\alpha_i(a_j)=\delta_{i,j}$ for
	every $a_j\in D^{(I)}$. Hence, for every $i$,
	\begin{equation*}
		\alpha_i(T) = a_i^{(d-1)}\otimes a_i^{(d)}.
	\end{equation*}
	Since $\alpha_i(T)\in\Sym^2(V)$, it follows that
	$a_i^{(d-1)}=a_i^{(d)}$. Now take $I={1,\dots,d-3,d}$. Arguing as
	before, we arrive at $a_i^{(d-2)}=a_i^{(d-1)}$. By
	induction $a_i^{(k)}=a_i^{(l)}$ for all $k,l$, proving the claim.

	The first statement follows, because
	$\operatorname{f-rank}_{I,I^{c}}(T)\leq\operatorname{rank}(T)$ and equality
	trivially implies that $D^{(I)}$ is linearly independent for any
	minimal decomposition $D$.
\end{proof}

Note that the first assertion is formulated in terms of geometric properties of $T$, whereas the second statement just limits our ability to write down non-symmetric decompositions of symmetric tensors. 


\section{Open Problems}

\begin{enumerate}
\item Consider the case when  $X=Sub_{r,\dots,r}(V\otdot V)$, and $X_{sym}=Sub_{r}(S^{n}(V))$. 
Under what general conditions is it true that $X\text{-rank}(T) = X_{sym}\text{-rank}(T)$ for all tensors $T$ in $S^{n}(V)$?  We can ask the same question for border rank. Notice that when $r=1$, $Sub_{1,\dots,1}(V\otdot V) = Seg(\PP V\tdt \PP V)$  and $Sub_{1}(S^{n}(V))= v_{n}(V)$.
Therefore, Comon's conjecture is a special case of this problem.

It was remarked that for $r=1$ some cases are known.  For instance if $dim(V) =2$ and arbitrary $r$, etc.  see also, \cite{Comon-Mourrain}.   There are some very old results, for instance, Prony's method from 1795, which is equivalent to Sylvester's algorithm, \cite{Prony}.

\item 
Consider  $X=Sub_{r_{1},\dots,r_{n}}(V_{1}\otdot V_{n})$, and $X_{sym}=Sub_{r}(S^{n}(V))$. 
What is the generic $X\text{-rank}$ and $X_{sym}\text{-rank}$ for $X$ and $X_{sym}$? Note that  $r_{1}, \dots, r_{n}$, must have $r_{i}\leq r_{j}r_{k}$ for $i,j,k$ distinct, otherwise it collapses.

Suggestion of Weyman:  One can relate this to a question of the existence of an open orbit. There is a close relation to the Castling Transform. Start with $\CC^{p}\otimes \CC^{q}\otimes \CC ^{r}$ with $p\leq q \leq r$ and $r\leq pq$.  Then transform the question to $\CC^{p}\otimes \CC^{q}\otimes \CC^{pq-r}$.  Then you can restrict to an open set and get a correspondence on orbits. Therefore the property of the existence of an open orbit occurs simultaneously for  $(p,q,r)$ and $(p,q,pq-r)$. 

\item Let $X = v_{k}(\PP V)$ and consider $Y_{r} = \{[T] \in S^{k}V\mid X\text{-rank}(T) \leq r \}$ with $k>2$.  Conjecture 6.10 \cite{CGLM} claims that $Y_{r} \neq \overline{Y_{r}}$ for $1<r<R_{S}$, where $R_{S}$ is the minimum $r$ such that $Y_{r} = S^{k}V$.  In particular, this says that the set of symmetric tensors of rank at most $P$ is closed only for $P=1$ and $P=R_{S}$.  What can we say about this conjecture in this case and when $X$ is a subspace variety or a Segre-Veronese variety?

\item What can we say about the uniqueness or finiteness of the corresponding decompositions $t = t_{1} + t_{2} +\dots + t_{d}$, where $t_{i}\in X$, and $X\text{-rank}(t) =d$.  We can ask this question when $X$ is any of the varieties we have mentioned. Particularly interesting are the cases when $X$ is a subspace variety, or a symmetric subspace variety.  Again, many cases are already known, for instance see \cite{Strassen83}.

\item Is the subspace variety version of Strassen's  direct sum conjecture true?  
Let $t_{1}\in A_{1}\otimes B_{1}\otimes C_{1}$,  $t_{2}\in A_{2}\otimes B_{2}\otimes C_{2}$, so that $t_{1} \oplus t_{2 }\in(A_{1}\oplus A_{2}) \otimes 
(B_{1}\oplus B_{2}) \otimes (C_{1}\oplus C_{2}) = A\otimes B\otimes C.$ Let $X_{i} =Sub_{a^{i},b^{i},c^{i}}(A_{i}\otimes B_{i}\otimes C_{i}) \subset \PP (A_{i}\otimes B_{i}\otimes C_{i})$ and $X= Sub_{a,b,c} (A\otimes B\otimes C)\subset \PP (A\otimes B\otimes C)$ be the respective subspace subvarieties. 
We ask if the following equality holds:
\[X\text{-rank}(t_{1}\oplus t_{2}) = X_{1}\text{-rank}(t_{1}) + X_{2}\text{-rank}(t_{2}). \]
What, if any, relationship must there be between $X$, $X_{1}$ and $X_{2}$? What is the status of the conjecture for other varieties $X,X_{1},X_{2}$?  This question can be posed for an arbitrary number of factors as well.

\item The previous 4 questions are posed over the base field $\CC$.  Over $\RR$, what are the typical $X\text{-rank}$'s and  $X_{sym}\text{-rank}$'s in the semi-algebraic sense?

\item Tensor product of cones over orthants: Let $A_{+} = \RR ^{a}_{+}, B_{+} = \RR ^{b}_{+}, C_{+} = \RR ^{c}_{+}$. Let $X_{+}\subset A_{+}\otimes_{\RR_{+}} B_{+}\otimes_{\RR_{+} } C_{+} =: V_{+}$ be a semi-algebraic variety. 
Study the $X_{+}$-rank.  
A tensor with all non-negative coefficients can be written as a non-negative linear combination of tensors of lower rank. In this case we must have $X_{+} = Seg_{+} = (Sub_{1,1,1})_{+}$, and $X_{+}= Seg(\PP A \times \PP B \times \PP C) \cap V_{+}$.
It was remarked that (of course) this rank can be larger than the  $X$-rank.


\end{enumerate}





\part{Mulmuley-Sohoni Approach to $\overline{VP}$ vs. $VP$ vs. $VNP$}
\section{Report}
The Mulmuley and Sohoni approach to P vs. NP  has as its primary goal to ``reduce hard nonexistence problems in complexity theory to tractable existence problems in geometry and representation theory'', \cite{MulSoh0}.  Before we get to this geometry and representation theory, we will briefly review some of the relevant notions.

During the workshop, L. Valiant described algebraic versions of the complexity classes P and NP (these are commonly referred to as VP and VNP), \cite{Valiant79}. Computing the permanent is known to be an VNP-hard problem, however, by using techniques from linear algebra, the determinant can be computed in polynomial time.  Valiant's approach to P vs. NP suggests attempting to show that if one computes the permanent of an $m\times m$ as the determinant of an $n\times n$ matrix, that $n$ must grow faster than a polynomial in $m$.  (For reference, see \cite{ MulSoh1,Mul07}.)

P.  B\"urgisser, J.-Y. Cai pointed out that Toda defined a skew arithmetic circuit (see \cite{Toda91,TodaOg92,Toda92}).  An arithmetic circuit can be described by a class of finite directed acyclic graphs.  All of the edges (usually called wires) are directed towards a single output node.  Each node has either zero or two incoming wires. Nodes with no incoming wires are called inputs and the input to each wire is either a constant or a variable.  Nodes that have two incoming wires are called gates, and each gate does a single elementary computations on those inputs.  
 
In an arithmetic circuit, there are only two types of gates allowed - multiplication and addition.  So each gate takes precisely 2 polynomials as inputs and outputs either the sum or the product of the input polynomials.  A gate may have many identical outputs, but there is only one gate that has no outgoing wire, and this contains the final output of the circuit.  A skew arithmetic circuit carries the additional requirement that every multiplication gate have at least one of its inputs either a variable or a constant.  Locally, a skew arithmetic circuit for multiplication looks like the following.
\[
\begin{array}{ccccc}
x_{i} &&&& g \\ & \searrow & & \swarrow  \\ && (*) 
\\ && \downarrow \\ && x_{i}g
\end{array}
\]
The inputs of this circuit are the variable $x_{i}$ and the polynomial $g$ and the output is their product, $x_{i}g$.   The final output of an arithmetic circuit is a polynomial. Toda showed that the determinant of an $n\times n$ matrix can be computed by a skew arithmetic circuit whose size is bounded by a polynomial in $n$.  A sequence of polynomials is in $VP$ if it can be computed by a sequence of skew arithmetic circuits whose sizes and degrees are bounded by a polynomial in the number of inputs.

Toda also defined a notion of weakly skew. Malod and Portier, \cite{MalodPortier} recall that a weakly skew arithmetic circuit is an arithmetic circuit with the following additional requirement: of the two sub-circuits above each multiplication gate, at least one must become disjoint from the rest of the circuit upon deletion of its wire to the multiplication gate.
This may be depicted roughly as follows:
\[
\begin{array}{ccccc}
\square &-&\text{disjoint}&-& \square \\ & \searrow & & \swarrow  \\  && (*) \\ && \downarrow
\end{array}
\]
The boxes represent sub-circuits of the original circuit and $(*)$ is a multiplication gate.
 A sequence of polynomials is in $VP_{WS}$ if it can be computed by a sequence of weakly skew arithmetic circuits whose sizes and degrees are bounded by a polynomial in the number of inputs. These are examples of Valiant's complexity classes. It is not known whether $VP = VP_{WS}$ in general, however, Valiant comments that if you allow $n \log(n)$ - i.e. quasi-polynomial, then all of these classes collapse.  

The sequence $(det_n)$ is complete for the class  $VP_{WS}$ \cite{MalodPortier}.  Since the sequence $(perm_m)$ is VNP complete,  Mulmuley and Sohoni are trying to prove that  $\overline{VP}_{WS} \neq VNP$, where  the class $\overline{VP}_{WS}$ is defined analogously to $\overline{VP}$ as in the paper of B\"urgisser, \cite{Buer:03a}.

Since we don't want to be deceived by bad choices of coordinates, we work with the appropriate orbits of $perm$ and $det$. The Mulmuley-Sohoni approach to $P$ vs $NP$ is to determine if there exists a polynomial $n(m)$ such that 
\[
[l^{n-m}perm_{m}] \in \overline{[GL(W).det_{n}]}
,\]
where $W=Mat_{n\times n}\CC$ and $l$ is a linear form.  We also want to know what we can say about the inclusion
\[  [End(W).det_{n}] \hookrightarrow [\overline{GL(W).det_{n}}].\]
It should be noted that the description of the orbit closure of a given polynomial  is in general very complicated.

The strategy suggested is to understand the homogeneous coordinate rings.  We should try to understand the modules of functions $S\subset \CC [\overline{GL(W).l^{n-m}perm_{m}}]$ which do not occur in $\CC [\overline{GL(W).det_{n}}]$.  Mulmuley-Sohoni call such an $S$ a \emph{representation theoretic obstruction} for $l^{n-m}perm_{m}$ to lie in the variety $\overline{GL(W).det_{n}}$, \cite{MulSoh0}.

\begin{remark}
Let $V$ be a $G$-module for a reductive group $G$, and let $v\in \PP V$.  The we have the following inclusions $G.v\subset \overline{G.v} \subset \PP V$.  The big problem is to understand the restriction $\CC[\overline{G.v}] \hookrightarrow \CC[G.v]$.  
 The only general tool is a theorem of Kostant.
It says that if the closure $\overline{G.v}$ is normal and $\overline{G.v}\setminus G.v$ has codimension 2, then the restriction map is an isomorphism. However in our examples these conditions are almost certainly not satisfied.
Representation theory gives the following decomposition: Let $G_{v}$ denote the stabilizer in $G$ of $v$, then  
\[ 
\CC[G.v] = \CC[G \slash G_{v}]  = \bigoplus_{\lambda\in Irrep(G)} V_{\lambda}^{*}\otimes V_{\lambda}^{G_{v}}
.\] 
This gives information about the coordinate ring of the orbit closure.

\end{remark}

\vspace{.1in}\noindent
\textbf{Step 1:}
The first example is if $v = [det_{n}]$, and $W=M_{n}\CC = A\otimes B$, then 
\[
\CC[GL(W).det_{n}] = \bigoplus K_{\lambda,\delta^{n},\delta^{n}} S_{\lambda}(A\otimes B)
,\] where $K_{\lambda,\delta^{n},\delta^{n}} $ are Kronecker coefficients.

The alternate description is that 
\[
\CC[GL(W). det_{n}] = Sym (\CC^p\otimes \CC^n\otimes \CC^n )^{SL_n\times SL_n}
\]
If $\lambda$ has 2 parts (i.e. $p=2$), then the coefficients $K_{\lambda,\delta^{n},\delta^{n}}$ are known.   This is because the ring of invariants in this case is well known to be isomorphic to the ring of semi-invariants of a Kronecker quiver. see \cite{WeySkow}.

The connection is that $\CC^p\otimes \CC^q\otimes \CC^r$ can be viewed as any of the following:  (i) $p$ copies of $\CC^q\otimes\CC^r$.  (ii) $p$ maps from $\CC^q$ to $\CC^r$.  (iii) the representation space of a quiver with 2 vertices 1,2 and $p$ arrows from vertex 1 to vertex 2, of dimension vector $(q,r)$. This is a generalized Kronecker quiver, for 2 arrows we get just the Kronecker quiver. 
If we look at occurrences of $S_\lambda \CC^p\otimes S_\mu \CC^q\otimes S_\nu \CC^r$ in there, with $\lambda$ having 2 parts, the same representation occurs with the same multiplicity in the coordinate ring, as for $p=2$.

The Kronecker coefficients can be calculated using the Sylvester formula for the covariants of binary forms. Computing the Kronecker coefficients in general is a very difficult problem.  For the computational complexity of these computations, see \cite{BurgIke}.  Immediately after the workshop, Manivel was able to make progress on this problem, \cite{Manivel08}, (cf. problem (4) below).

There are special cases when, $K_{\lambda,\mu,\nu}$, are known to be computable in polynomial time.  They are as follows. (1) one of partitions has  one row. (2) two of the partitions have two rows. (3) two partitions are hooks. (4) one partition is a hook and another has two rows.  (For these results and more, see \cite{Rem89, Rem92, Ros01}).  

We want to find conditions which will determine when $K_{\lambda,\delta^{n},\delta^{n}}$ is zero or not.  A central question is, ``Is this decidability problem in $P$?''  Mumuley conjectures that this decidability problem is in $P$.  However, this  is an open question, \cite{Mul07}.  Another possibly interesting article on this topic is \cite{CHM}.

Computational efforts have determined that $K_{\lambda,\delta^{n},\delta^{n}} $ is almost never zero.  In fact Derksen and Landsberg compared what occurred in the symmetric algebra in each degree.  They did this in order to have a first approximation as to which modules could even be modules of polynomials.  Unfortunately, not only did each module occur, but with high multiplicity.

Ultimately, the success of this line of reasoning will depend on asymptotic questions.  It becomes necessary to describe a certain polytope which is asymptotically nonzero.  There is a hope that this method will be reasonable because the $K_{\lambda,\delta^{n},\delta^{n}} $ are not just generic Kronecker coefficients, they have a special format.

The idea was advanced to use the moment map to establish the vertices and/or faces of the cone
$$C(p,q,r) =\lbrace (\lambda ,\mu ,\nu)\ |\ K_{\lambda ,\mu,\nu}\ne 0\rbrace$$
This is an interesting problem, and L. Manivel described what is known during the workshop.

\vspace{.1in}\noindent
\textbf{Step 2:} The second example  is the permanent $v=perm_{m}$. We need to consider
\[
\CC[GL(W).perm_{m}] = \bigoplus M_{\lambda} S_{\lambda}(A\otimes B),
\]
where $M_{\lambda} = \Sigma_{\mu,\nu, ~} K_{\lambda,\mu,\nu}$, and we require that $\mu$ (resp. $\nu$) is such that $[S_{\mu}A]_{\circ}^{\mathfrak{S}_{n}} \neq 0$  ( resp. $[S_{\nu}B]_{\circ}^{\mathfrak{S}_{n}} \neq 0$).
This is seen to be a more tractable question, and there are a few papers on this.  
For $n=3$, Weyman could show the formula for $[S_{\mu} A]_\circ$ as an $\mathfrak{S}_3$-module.

\vspace{.1in}\noindent
\textbf{Step 3:}
The final question is what happens when we expand to more variables, i.e. when making the transition from $perm_{m}$ to $l^{n-m} perm_{m}$.  A polynomial $f$ is called \emph{stable} under the action of some group $G$ whenever $G.f$ is a closed set in the Zariski topology.  Unfortunately, while $perm_{m}$ is stable under the action of $SL_{m^{2}}$,  the polynomial $l^{n-m} perm_{m}$ is not stable under the action of $SL_{n^{2}}$ (for a short exposition of this concept, see \cite{Agrawal}).

Mulmuley and Sohoni tell us to consider partial stability, or $(R,P)$- stability.  We must enlarge our group action from $GL(W)$, (where $W$ is a $m$-dimensional subspace of $V$), to get a larger stabilizer of the form:

\[
\left(\begin{array}{cc}  Mat_{n\times n}\CC &* \\ 0 &*\end{array} \right)
\]

If we take the natural parabolic, $P\subset GL(W)$, we have a decomposition, $P=KU$, where $U$ is a unipotent radical and $K$ is the Levi factor.  Note that $GL(W)$ is a reductive group.  We choose a reductive subgroup $R\subset K$ which has the same rank (in the sense of the rank of the maximal torus).  Then we choose $v\in \PP V$ such that the following properties hold:

1) $v$ is $R$-stable (The orbit in affine space is closed under the $SL(V)$ action.)

2) $U \subset G_{v}\subset P$

Then Mulmuley and Sohoni have a result which is a partial understanding of $\CC[\overline{G.v}]$ in terms of $\CC[\overline{R.v}]$. Because of the $R$-stability, they can gain some understanding from $\overline{R.v}$.  
This stability allows for a significant reduction: Instead of studying the $G=GL(n^{2})$-module structure of  $\CC[\overline{G.l^{n-m}perm_{m}}]$, we can study the  $R= SL(m^{2})$ -module structure of $ \CC[\overline{R.perm_{m}}]$ (See Theorem 6.2 \cite{MulSoh0} and the remarks following).

The addition of the linear factor $l^{n-m}$ is also related to the Weyman-Kempf geometric technique of calculating syzygies, \cite{Weyman}.
To this end one notices  that the subvariety
$$Y=\lbrace f\in S_d (W)\ | f =l^s g\rbrace$$
where $l$ is a linear form, can be treated by means of the geometric technique.
One just has to consider the incidence variety
$$Z = \lbrace (f,l)\in S_d (W)\times \PP(W)\ \ \ |\ \ \ f=l^s g\rbrace ,$$
which gives a desingularization of the orbit closure. Here we identify the projective space $\PP(W)$ with the set of non-zero linear forms in $W$ (modulo the usual equivalence relation).
There are technical difficulties, as $Y$ is usually not projectively normal.

\section{Open Problems}
\begin{enumerate}

\item In the case $n=3$, compute the difference between $Gl_{n^{2}}.det_{n}$ and $Mat_{n^{2}\times n^{2}}.det_{n}$.  One containment is obvious.  In order to understand why the other containment could fail, consider the following example:  The set $Mat_{2\times 2}. (x^{d}+y^{d})$ does not contain the $GL_{2}$-orbit, $GL_{2}.(x^{d}+y^{d})$ for $d\geq 4$.  This example shows that one must be careful with orbits and closures as the behavior is not always as expected.

In general, compare $\overline{GL_{n^{2}}.det_{n}}$ to $Mat_{n^{2}\times n^{2}}.det_{n}$.  Note that while $Gl_{n^{2}}$ is a group, $Mat_{n^{2}\times n^{2}}$ is an algebraic monoid.

\item (Landsberg) If the two orbits in (1) are not equal, is there a polynomial $p(n)$ such that $Mat_{p(n)^{2}\times p(n)^{2}}.det_{p(n)}$ contains $\overline{l^{p(n)-n}Gl_{n^{2}}.det_{n}}$ for some linear form $l$?
\subitem The affirmative answer to this question would imply that $VP = \overline{VP}$.

\item Let $R = Sym(A \otimes B \otimes C) = \bigoplus_{\lambda,\mu,\nu} \left( S_{\lambda}A \otimes S_{\mu}B \otimes S_{\nu} C\right)^{K_{\lambda \mu \nu}}$, with $K_{\lambda \mu \nu}$ the Kronecker Coefficients.  Let $\Delta(A,B,C) =\{ (\lambda,\mu,\nu)\mid \exists n \text{ such that } S_{n\lambda}A \otimes S_{n\mu} B \otimes S_{n\nu}C \subset R\}$. It is known that  $\Delta(A,B,C)$ is a rational polyhedral cone.  \emph{Determine the facets of this cone.} 

Some cases of this question are already known. For instance in \cite{Franz}, Franz considered the case when $dim(A) = dim(B) = dim(C)$, and completely determined the facets in the $3\times 3\times 3$ case. His method does not necessarily generalize to give all the facets of the cone.  

There are also some partial results by Manivel \cite{Manivel97}, Klyachko \cite{Klyachko}, and Bernstein-Sjamaar \cite{Bernstein00}.

\item  The case when $dim(A) =3$ and $dim(B) = dim(C) = n$.  Compute the ring of invariants,
\[Sym(A \otimes B \otimes C)^{SL(B)\times SL(C)}\]
Note that if $dim(A)=2$, then this is the known Sylvester formula, which is related to the Kronecker coefficients.
This problem would determine $K_{\lambda,\delta^{n},\delta^{n}}$ for $\lambda$ having $\leq 3$ parts, (see \cite{Manivel08}). 

\item Consider $S_{\lambda}A$, where $|\lambda| = \sum \lambda_{i}$ is divisible by $dim(A)$, (i.e. $\lambda$ is in the root lattice).
Let $T$ be the maximal torus of $SL(A)$, i.e. the diagonal matrices with $det=1$. Let $\mathcal{W}$ denote the Weyl group of $SL(A)$. Let $ \left(S_{\lambda} A\right)_{\circ}$ be the zero weight space.    Then $  \left(S_{\lambda} A\right)_{\circ} =  \left(S_{\lambda} A\right)^{T}$ is a representation of $\mathcal{W}$.  For which $\lambda$ does there exist a $\mathcal{W}$ invariant?  In other words, for which $\lambda$ is $\left(S_{\lambda}A\right)_{\circ}^{\mathcal{W}} \neq 0$?

Manivel explained that the answer to this question is known  to be equivalent to the fact that $S_{\lambda} A$ has non zero multiplicity in some $S^d(S^n A)$.

\end{enumerate}

\begin{remark}
Questions 3 and 4 relate to the coordinate ring $\CC[GL_{n^{2}}.det_{n}]$, while questions 3 and 5 relate to the coordinate ring $\CC[GL_{m^{2}}perm_{m}]$. Question 3 leads to an asymptotic understanding, but it is a completely general question.
\end{remark}
\begin{remark}
It is incredible how many important areas come to play.
\end{remark}

\part{Matchgates and Holographic Algorithms}
\section{report}
In this group, the goal was to understand the work of J.-Y. Cai- P. Lu (see \cite{DBLP:conf/coco/CaiL07} for example) and L. Valiant (see \cite{Valiant08}) in light of algebraic geometry and representation theory.

Let $\mathbb{F}$ be a field. If $\Gamma$ is a planar graph with $\gamma$ nodes and $\mathbb{F}$-weighted edges,  the FKT (Fisher-Kasteleyn-Temperley) algorithm counts the number of perfect matchings for $\Gamma$ in polynomial time as follows.   The algorithm assigns an orientation to the graph (by giving a $\pm 1$ to the weight of each edge) and associates to $\Gamma$ a skew symmetric matrix, $k(\Gamma)$.  The Pfaffian of $k(\Gamma)$ counts the number of perfect matchings of $\Gamma$.  The Pfaffian and the determinant are known to be computable in polynomial time via an application of linear algebra.

The goal of matchgates and holographic algorithms is to exploit this polynomial time algorithm to solve problems which were once unknown to be in $P$.  The methods work when they can change a counting problem to one that just requires computing a determinant or Pfaffian.

An example of the use of matchgates is the following.  Consider the problem $\#_{7}$Pl-NAE-ICE which asks the number (modulo 7) of orientations of a planar graph so that there are no sources and no sinks.  The first step is to replace all of the vertices and edges of this graph with signatures (called  generators $G$ and recognizers $R$) and then try to model these signatures with matchgates.  At each node, the signature $(0,1,1,1,1,1,1,0)$ is the truth table values required for the ``not all equal'' condition.  At each edge, the signature $(0,1,1,0)$ is truth table values required by the fact that each edge carries precisely one orientation.  If these signatures can be realized by matchgates, then we will be able to compute $\#_{7}$Pl-NAE-ICE in polynomial time because this procedure changes the question about the old graph to a question about the new graph which can be done in polynomial time and gives the desired answer for the old graph.

For a given problem, what is desired is to know whether there exists a skew symmetric matrix associated to a particular signature.  This can be determined by examining the so-called matchgate identities (MGI).  If the desired signature satisfies MGI, then it is possible to compute that signature as the number of perfect matchings of a graph.  If the desired signature does not satisfy MGI, all is not lost.  Special basis changes are allowed as long as they are done in a way that is consistent with both the generator and the recognizer - more on this in a moment.

If $I \subset [\gamma]$, let $Pf_{2^{I}}$ denote the vector (in $\CC^{2^{|I|}}$) of principal sub-Pfaffians of $k(\Gamma)$ produced by omitting nodes indexed by $J\subset I$ and the adjacent edges.  $\underline{G}$ is a \emph{standard signature} if it occurs as a vector of Pfaffians $Pf_{2^{I}}$ for some graph $\Gamma$ (\emph{i.e.} the MGI are satisfied).  A signature $G$ is \emph{realizable} if there exists a linear map $f:\CC^{c}\rightarrow \CC^{2^{b}}$ so that $\underline{G} =f.G$. Note that this is just a linear map, and not even a group action.  The map $f$ is defined by taking a sufficient tensor power of a vector and implementing it as a matrix. Similarly, a recognizer signature $R$ is realizable if $R=\underline{R}.f$ for some standard signature $\underline{R}$ and the same $f$ as before.  The goal is to find an $f$ so that both $R$ and $G$ are realizable.

This problem has interesting connections to geometry that the participants hope to be able to exploit upon gaining further understanding of these connections.  Consider $Spec((\CC^{2*}))^{k} / MGI) = \$_{k}$.  The MGI are well known to geometers, and the varieties that they cut out have been studied already. There is a canonical identification of the MGI variety, $\$_{k}$, with the spinor varieties, $\mathcal{S}^{k-1}$, given by the inclusion
\[
\mathcal{S}_{+}^{k-1} \setminus \{0\} \sqcup \mathcal{S}_{-}^{k-1} \setminus \{0\} \hookrightarrow \$_{k}.
\]
We also have maps $f:\CC^{c}\rightarrow \CC^{2^{b}}$ locally defined on the wires of the graph.  So, given an explicit piece of data, $\CC^{2^{k}}\otimes \CC^{2^{l}}$, (assume $k<l$) and pairing information $:[k]\hookrightarrow [l]$, and a map $f:\CC^{c}\rightarrow \CC^{2^{k}}$ as above, if $c=2$ then the basis collapse theorem of Cai-Lu tells us that we can reduce the $k$ to $2$.  It is unknown what happens if $c\neq 2$.


\section{Open Problems}
Exercise:  Understand the space of signatures in the product of spinor varieties $\mathcal{S}_{k}\times \mathcal{S}_{l}$.  What is the compatible action?
\begin{enumerate}
\item  Using group actions, reprove the Cai-Lu Basis Collapse Theorem.

The vector space $\CC^{2^{k}}\otimes \CC^{2^{l}}$ has (at least) two useful descriptions.
We can consider the tensor product of spin representations, $\Delta_{k}\otimes \Delta_{l} \simeq \CC^{2^{k}}\otimes \CC^{2^{l}}$,  where $\Delta_{p} = \bigwedge^{even}\CC^{p+1}$: this reflects the action of the Spin groups on the space.
We can also consider the description as tensor powers of $\CC^{2}$, $\left(\CC^{2}\right)^{\otimes k} \otimes \left(\CC^{2}\right)^{\otimes l} \simeq\CC^{2^{k}}\otimes \CC^{2^{l}} $: this reflects the action of the $2\times 2$ matrices.

One may ask: Why not consider a larger space of actions, but the Cai-Lu result reduces to studying the action (actually just a mapping) of $Mat_{2}(\CC)$. 
(Gurvits comments that the spirit of this problem is like quantum computing and unitary operations.)

The goal of this problem is that by rephrasing the proof in terms of geometry and representation theory, this will make the proof of the basis collapse theorem more transparent to geometers, and perhaps it will lead to a greater understanding.  It is hoped that this greater understanding will lead to a proof of the next problem.

\item  Study matchgates with questions about colors instead of just yes/no questions.  Use the results of the previous problem to try to find a natural class of transformation for this setup. The idea is that one may be able to use a different group  ($GL_{4}$ for example) in place of $GL_{2}$.
Does the Cai-Lu Theorem generalize?  The expectation is that it will not, and in that case, this would be considered good news.
This is related to a paper of Valiant which considered 3 colors and saw some success.

More specifically, if $\underbar{G}$ is a standard generator and $\underbar{R}$ is a standard recognizer,  Cai-Lu considered the mapping by $T$, a $2^{k}\times 2$ matrix which sends the standard signatures $\underbar{G}, \underbar{R}$ to desirable non-standard signatures $G,R$.  What if we consider the case that $T$ is a $2^{k}\times 3$ matrix, or a $2^{k} \times 4$, etc, matrix?  This would have interesting applications to theoretical computer science.

\item  Consider $(\CC^{2})^{\otimes k} \otimes ((\CC^{2})^{*})^{\otimes l} \simeq \Delta_{k}\otimes \Delta_{l}$.  We want to understand the diagonal $GL_{2}$-action on points of $\mathcal{S}_{k}$ as an orbit structure.  Many cases are already known.  For instance, Cai pointed out that there is a 2 dimensional space of matrices which leave $\mathcal{S}_{k}$ invariant.  What is desired is to understand and classify the points which have a 1 dimensional space of matrices which move the point off of the spinor variety.  The reason is that if a point has this feature, there will then be 1 dimension's worth of freedom for the each factor in $Seg(\mathcal{S}_{k}\times \mathcal{S}_{l})$.  This in turn will tell which generators have a 1-parameter family of compatible recognizers.
\end{enumerate}

\part{Entanglement and Quantum Information Theory}
\section{Report}
The contributions of G. Gour and D. Gross mention the long-standing additivity conjecture of quantum information theory \footnote{Soon after the workshop, the additivity conjecture for minimum output entropy was proven false by M.B. Hastings, \cite{Hastings}.}. D. Gross posed a question which is closely related to this open problem -- but
has the advantage of being independent of the measure used to quantify entanglement. In particular, there is no need to talk about non-algebraic quantities such as the ``entropy of entanglement''.

The basic setting of the additivity conjecture is a linear space with a two-fold (horizontal and vertical) tensor product structure 
\begin{eqnarray*}
& \left(A_1\otimes B_1\right) \\{}
 \otimes & \left(A_2\otimes B_2\right) .\end{eqnarray*}

Physically, the spaces $A=A_1\otimes A_2$ and $B=B_1\otimes B_2$ are associated with distinct observers, in a way that need not concern us. Let $S_1$ be any subspace of $A_1\otimes B_1$ and likewise $S_2\subset A_2\otimes B_2$. Set $S=S_1\otimes S_2$.

We are interested in the set of vectors in $S$ which display the ``least amount of entanglement'' with respect to the partition $A$ vs. $B$. The physical quantity ``entropy of entanglement'' measures
the deviation of a vector $s\in S$ from being decomposable -- of the form $s_A\otimes s_B$. (The precise definition will not be relevant for the question asked below). The additivity conjecture
amounts to claiming that the set of least entangled vectors always contains one which is decomposable \emph{with respect to the partition $1$ versus $2$}.\footnote{ The conjecture pertains only to the case
where the entropy of entanglement is used to measure the degree of non-decomposibility of a vector. Analogous statements for several other measures -- most notably the tensor rank -- have recently been
proven to be wrong.  } In other words, it is conjectured that if one aims to identify the global minimum of the entropy of entanglement in $S$, one may restrict attention to vectors of the form $s_1\otimes
s_2$, $s_i\in S_i$. It is likely that many researchers in the field would agree to slightly stronger and less precise statement that ``generically'' the global minimum is attained \emph{only} on decomposable vectors.

D. Gross also asks the following consistency question. The optimization problem we set out to solve (find the least entangled vector in $S$) \emph{does not refer to the vertical tensor product structure}.
However, the conjectured statement does. So is the vertical decomposition intrinsic to the problem? Given just the data $A, B$ and $S\subset A\otimes B$, when can one introduce such an additional
tensor product structure and when is it unique?

If a negative answer could be given -- i.e.\ if the tensor product structure is essentially uniquely determined by $S$ -- we would have learned an interesting fact about tensor product spaces. A positive answer would be an \emph{extremely} interesting starting point for the search of a counterexample to the additivity conjecture.

Deciding additivity is considered the most high-profile open problem of quantum information. Any partial result would be met with great interest. There is no compelling evidence either in favor or against the conjecture.

The group working on entanglement and quantum information theory reports that while they did many calculations, they did not solve any of the open problems.

In addition, they computed for $s\in S_{1}\otimes S_{2}$, the quantity $\min\{H(s)\}$, where $H$ can be the entropy or the non-locality for example.  They claim that it is also possible to compute 
\[ \min_{s_{1}\in S_{1}}H(s_{1}) +\min_{s_{2}\in S_{2}}H(s_{2})
,\]
and in so doing, they find $s_{1}^{opt}\otimes s_{2}^{opt}$.  It is unknown whether this is a local optimum.  The following statement has been solved:  If  $s_{1}^{opt}\otimes s_{2}^{opt}$ is a local optimum,  $s_{1}^{opt}\otimes s_{2}^{opt}$ is a critical point, and this implies that locally to first order that additivity works.  Unfortunately, this tells nothing about the global problem.


\section{Open Problems}
\begin{itemize}
\item[(1)] To which degree can one violate the Rank Multiplicitivity (simpler version)?
\end{itemize}
\textbf{Note:}
This problem is perhaps not so interesting as it was aimed at the additivity conjecture  for minimum output entropy which was recently proved to be false.  Nonetheless, we report on the activity from the workshop surrounding this problem.

Consider subspaces $S_{1}\subset A_{1}\otimes A_{2}$, $S_{2}\subset B_{1}\otimes B_{2}$, and let $r(S_{i}) =\min _{0\neq s\in S_{i}}\{rank(s)\}$. Let $S= S_{1}\otimes S_{2}$.  We may consider  $S\subset (A_{1}\otimes A_{2})\otimes ( B_{1}\otimes B_{2})$, or under the natural braiding isomorphism, we can consider $S \subset (A_{1} \otimes B_{1})\otimes (A_{2}\otimes B_{2})$.  It is known that there exist examples where $r(S) \neq r(S_{1})r(S_{2})$.
Find examples of large violations.  (Make it precise what it means to be ``large''.)
It should be noted that this is a toy example.  There is not an automatic correspondence to the big problem for entropy or entanglement.

Following are two examples which were worked out during the workshop.
Let $X\subset Mat_{n\times n}(\RR)$ be a linear subspace of real valued matrices, and let $Y =X\otimes X$. Define $MinRank(X)$ as a minimal rank of nonzero matrices in $X$. Clearly $(MinRank(X))^{2} \geq MinRank(Y)$. 

\begin{prop}[L. Gurvits]
The MinRank - multiplicativity conjecture, $(MinRank(X))^{2} =MinRank(Y)$, is false over $\RR$.  Moreover, there can be an arbitrarily large decrement $(MinRank(X))^2 - MinRank(Y))$ over $\RR$.
\end{prop}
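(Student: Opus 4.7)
The plan is to construct, for each even $n$, an explicit two-dimensional subspace $X\subset \mathrm{Mat}_{n\times n}(\RR)$ with $\mathrm{MinRank}(X)=n$ whose tensor square $Y=X\otimes X\subset \mathrm{Mat}_{n^2\times n^2}(\RR)$ contains a nonzero element of rank only $n^2/2$. Letting $n\to\infty$ then gives unbounded decrement $n^2-\mathrm{MinRank}(Y)\ge n^2/2$.

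First, I would build the subspace. Pick any real $n\times n$ matrix $J$ with $J^2=-I$ (for example, the block-diagonal matrix with $2\times 2$ blocks $\bigl(\begin{smallmatrix}0&-1\\1&0\end{smallmatrix}\bigr)$); such $J$ exists whenever $n$ is even. Set $X=\mathrm{span}_\RR\{I,J\}$. For any nonzero $(a,b)\in\RR^2$, the factorization $(aI+bJ)(aI-bJ)=(a^2+b^2)I$ shows that $aI+bJ$ is invertible, so every nonzero element of $X$ has rank $n$ and thus $\mathrm{MinRank}(X)=n$.

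Next I would exhibit the low-rank element in $Y$. Consider $M:=I\otimes I+J\otimes J\in X\otimes X$. Since $(J\otimes J)^2=J^2\otimes J^2=I\otimes I$, the operator $J\otimes J$ has spectrum contained in $\{+1,-1\}$, and $M=I\otimes I+J\otimes J$ acts as $2$ on the $+1$-eigenspace and $0$ on the $-1$-eigenspace of $J\otimes J$. Hence $\mathrm{rank}(M)$ equals the dimension of the $+1$-eigenspace. To compute this, I would diagonalize over $\CC$: because $J$ is real and $J^2=-I$, its complex eigenvalues are $\pm i$ occurring in conjugate pairs, each with multiplicity $n/2$. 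The eigenvalues of $J\otimes J$ are the products, so $+1$ appears with multiplicity $2\cdot(n/2)^2=n^2/2$ (from the $(+i,-i)$ and $(-i,+i)$ pairs) and $-1$ likewise with multiplicity $n^2/2$. Therefore $\mathrm{rank}(M)=n^2/2$, giving $\mathrm{MinRank}(Y)\le n^2/2$.

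Combining the two computations yields $(\mathrm{MinRank}(X))^2-\mathrm{MinRank}(Y)\ge n^2-n^2/2=n^2/2$, which is already a counterexample for $n=2$ (gap $2$, with $X$ the standard copy of $\CC$ inside $\mathrm{Mat}_{2\times 2}(\RR)$) and grows without bound as $n\to\infty$. The only real point requiring care is the eigenvalue count for $J\otimes J$; I expect this to be the main technical step, but it is just linear algebra over $\CC$ using that real matrices have conjugate-paired complex spectra. Note that the phenomenon is genuinely real: over $\CC$ no analogue of $X$ exists, since a two-dimensional subspace of $\mathrm{Mat}_{n\times n}(\CC)$ always contains singular elements, so this construction cleanly isolates the failure of MinRank-multiplicativity as a feature of the real field.
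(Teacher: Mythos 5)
Your proposal is correct and is essentially the paper's own argument: your $J$ (block-diagonal with $2\times 2$ rotation blocks, so $J^2=-I$) is, up to permutation similarity, the paper's $M\otimes I_{n/2}$, and both proofs conclude by noting that $J\otimes J$ is an involution whose $\pm1$-eigenspaces each have dimension $n^2/2$, so $I\otimes I\pm J\otimes J$ has rank $n^2/2$. The only cosmetic difference is that the paper first presents the $2\times 2$ case and then tensors with $I_n$ to get the unbounded decrement, whereas you parametrize by $n$ from the start.
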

\begin{proof}
Let $I_{k}$ denote the $k \times k$ identity matrix. Consider the matrix, $ M= \left(\begin{array}{cc} 0 & 1\\ -1 & 0 \end{array}\right)$.  Let $X = span\{M,I_{2}\}$.  Clearly, the minimum rank of any nonzero element in $X$ is 2. The spectra $\sigma(M \otimes M) = (1,1,-1,-1)$ and $M \otimes M$ is symmetric. Therefore, we find that $rank(M \otimes M + I_{4}) = rank(M \otimes M -I_{4}) =2 < 4$.  So MinRank multiplicativity is false.

For the arbitrary large decrement, modify the example as follows.  Construct the linear space $S_{2n} = X\otimes I_{n} =span\{ M\otimes I_{n}, I_{2}\otimes I_{n}= I_{2n} \}$.
Then the minimal rank of nonzero matrices in $S_{2n}$ is $2n$, but  $rank ( (M \otimes I_{n}) \otimes (M \otimes I_{n}) - I_{4n^2}) = 2n^2$ and this provides an arbitrarily large decrement.
\end{proof}
\begin{remark}
S. Friedland observed that the same example shows that the entropy additivity conjecture is false over reals since $\log(2n) + \log(2n) > \log(2n^{2})$.
\end{remark}

\begin{itemize}

\item[(2)]  Let $dim(S_{i}) = k_{i}$.  Assume each $S_{i}$ is generic in the sense that it is in general position in $A_{i}\otimes B_{i}$. Is it the case that you can have $r(S) = r(S_{1})r(S_{2})$?  This is \emph not \emph just a dimension counting argument because $S_{1}\otimes S_{2}$ may not be generic.  In fact, it was commented that $S_{1}\otimes S_{2}$ is never generic.  (See Gurvits's example for the real case above).

\item[(3)]  Ambiguity of Tensor Decomposition.  This area hasn't been explored much.

 Are there interesting situations which allow us to find a tensor product structure $A=A_1'\otimes A_2', B=B_1'\otimes B_2'$, ($A_i'\neq A_i$, $B_i'\neq B_i$) and subspaces $S_i'\subset
A_i'\otimes B_i'$ such that $S=S_1'\otimes S_2'$? To qualify as ``interesting'', $S$ should contain no unentangled vector (of the form $s_A\otimes s_B$). Also, it would be desirable if the two decompositions $A=A_1\otimes A_2$ and $A=A_1'\otimes A_2'$ shared no decomposable vector. (There should be some more precise conditions given for a solution to be interesting).

The uniqueness of this decomposition is interesting.  If you prove non-uniqueness, then you become a celebrity as it is related to an important problem in entanglement and quantum information theory.

\end{itemize}

\bibliographystyle{amsplain}
\bibliography{aim_proc}

\end{document}